\title{Mather-Yau Theorem in Positive Characteristic}
\author{
       Gert-Martin Greuel and Thuy Huong Pham\\
 }
\date{17 April, 2014}
\newtheorem{theorem}{Theorem}[section]
\newtheorem{corollary}[theorem]{Corollary}
\newtheorem{proposition}[theorem]{Proposition }
\newtheorem{remark}[theorem]{Remark}
\newcommand{\Q}{\mathbb{Q}}
\newcommand{\N}{\mathbb{N}}
\newcommand{\C}{\mathbb{C}}
\begin{document}
\maketitle

\begin{abstract}
The well-known Mather-Yau theorem says that the isomorphism type of the local ring of an isolated complex hypersurface singularity is determined by its Tjurina algebra. It is also well known that this result is wrong as stated for power series $f$ in $K[[{\bf x}]]$ over fields $K$ of positive characteristic. In this note we show that, however, also in positive characteristic the isomorphism type of an isolated hypersurface singularity $f$ is determined by an Artinian algebra, namely by a "higher Tjurina algebra" for sufficiently high index, for which we give an effective bound. We prove also a similar version for the "higher Milnor algebra" considered as $K[[f]]$-algebra.
\end{abstract}
\maketitle

%%%%%%%%%%%%%%%%%%%%%%%%%%%%%%
%%%%%%%%%%%%%%%%%%%%%%%%%%%%%%
\section{Introduction}
Let $K$ be an algebraically closed field of arbitrary characteristic and $K[[{\bf{x}}]]=K[[x_1, x_2,..., x_n]]$ the formal power series ring over $K$ with maximal ideal $\mathfrak{m}$. Let $f\in K[[{\bf{x}}]]$. We denote by 
\vskip 6pt \hskip 50pt 
${\rm {j}}(f)=\Big\langle\frac{\partial f}{\partial{x_1}},\frac{\partial f}{\partial{x_2}},...,\frac{\partial f}{\partial{x_n}}\Big\rangle$, the Jacobian ideal of $f$,
\vskip 6pt \hskip 50pt 
$M(f)=K[[{\bf{x}}]]/{\rm {j}}(f)$,  the Milnor algebra of $f$, and
\vskip 6pt \hskip 50pt 
$\mu(f)=dim_KM(f)$,  the Milnor number of $f$.
\vskip 6pt\noindent   Moreover, we call
\vskip 6pt \hskip 50pt 
${\rm {tj}}(f)=\left\langle f, \frac{\partial f}{\partial{x_1}},\frac{\partial f}{\partial{x_2}},...,\frac{\partial f}{\partial{x_n}}\right\rangle$, the Tjurina ideal of $f$,
\vskip 6pt \hskip 50pt  
$T(f)=K[[{\bf{x}}]]/{\rm tj}(f)$, the Tjurina algebra of $f$, and 
\vskip 6pt \hskip 50pt 
$\tau(f)=dim_KT(f)$, the Tjurina number of $f$.
\vskip 6pt\noindent  More generally, for  $k\in \N$, set
\vskip 6pt \hskip 90pt $T_k(f)=K[[{\bf{x}}]] \big/\big\langle f, \mathfrak{m}^k{\rm {j}}(f)\big\rangle$ resp. 
\vskip 6pt \hskip 90pt$M_k(f)=K[[{\bf{x}}]]/ \mathfrak{m}^k{\rm {j}}(f),$
\vskip 6pt \noindent and call it the \textit{$k$-th Tjurina} resp. \textit{$k$-th Milnor algebra} of $f$.

\vskip 5pt Two power series $f$ and $g$ in $K[[{\bf{x}}]]$ are said to be \textit{right equivalent}, denoted $f\mathop  \sim \limits^r g$, if there is an automorphism $\varphi\in Aut(K[[{\bf{x}}]])$ such that $g=\varphi(f)$. They are called \textit {contact equivalent}, denoted $f\mathop  \sim \limits^c g$, if there are $\varphi\in Aut(K[[{\bf{x}}]])$ and a unit $u\in K[[{\bf{x}}]]^\ast$ such that $g=u\varphi(f)$. If $f\mathop  \sim \limits^r g$ then the associated Milnor algebras are isomorphic and if  $f\mathop  \sim \limits^c g$ then the associated Tjurina algebras are isomorphic. 
The theorem of Mather and Yau says:

\begin{theorem}[{\cite[Theorem 1]{MY82}}] \label{MYoriginal}
Let $f, g\in \mathfrak{m}\subset \C\{\bf{x}\}$ be such that $\tau(f)<\infty$. The following are equivalent:\\
i)  $f\mathop  \sim \limits^c g$.\\
ii) $T(f)\cong T(g)$ as $\C$-algebras.
\end{theorem}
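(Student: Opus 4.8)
For i)$\Rightarrow$ii) I would argue directly from the definition. Write $g=u\cdot\varphi(f)$ with $\varphi\in Aut(\C\{{\bf x}\})$ and $u\in\C\{{\bf x}\}^{\ast}$. The chain rule gives $\partial g/\partial x_i=(\partial u/\partial x_i)\,\varphi(f)+u\sum_j\varphi(\partial f/\partial x_j)\cdot\partial\varphi(x_j)/\partial x_i$; since $\varphi(f)\in\langle g\rangle$, the factor $u$ is a unit, and the Jacobian matrix $(\partial\varphi(x_j)/\partial x_i)_{i,j}$ is invertible over $\C\{{\bf x}\}$, one gets ${\rm tj}(g)=\langle g\rangle+\varphi({\rm j}(f))=\varphi(\langle f\rangle+{\rm j}(f))=\varphi({\rm tj}(f))$. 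Hence $\varphi$ descends to a $\C$-algebra isomorphism $T(f)\cong T(g)$, and in particular $\tau(g)=\tau(f)<\infty$.

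The substantial direction is ii)$\Rightarrow$i), and here I would start from a $\C$-algebra isomorphism $\Phi\colon T(f)\to T(g)$ — so $\tau(g)=\tau(f)<\infty$ and both Tjurina ideals are $\mathfrak m$-primary — and reduce step by step. If $f\notin\mathfrak m^2$ then ${\rm j}(f)=\C\{{\bf x}\}$ and $T(f)=0$, hence $T(g)=0$, which forces $g\notin\mathfrak m^2$ as well; then $f$ and $g$ are both smooth germs and already $f\mathop\sim\limits^r g$. So I may assume $f,g\in\mathfrak m^2$. Next I would lift $\Phi$ to an automorphism $\varphi\in Aut(\C\{{\bf x}\})$ inducing $\Phi$ on the quotients; this is the standard lifting lemma for isomorphisms of Artinian quotients of $\C\{{\bf x}\}$ (using formal smoothness of $\C\{{\bf x}\}$ over $\C$ and the equality of embedding dimensions). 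Then $\varphi({\rm tj}(f))={\rm tj}(g)$, and the chain-rule identity above with $u=1$ gives ${\rm tj}(\varphi(f))=\varphi({\rm tj}(f))={\rm tj}(g)$; replacing $f$ by $\varphi(f)$ — still in $\mathfrak m^2$ and contact-equivalent to $f$ — I reduce to the case ${\rm tj}(f)={\rm tj}(g)=:I$. Finally, choosing $M$ with $\mathfrak m^M\subseteq I$ and using finite $\mathcal K$-determinacy (applicable since $I$ is $\mathfrak m$-primary), I would replace $f$ and $g$ by their $M$-jets: these are polynomials, are contact-equivalent to $f$ and $g$, and — because $\mathfrak m^M\subseteq I$ — still have Tjurina ideal $I$.

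With these reductions the idea is to join $f$ to $g$ through a contact-trivial family. Put $F:=f+t(g-f)\in\C[{\bf x},t]$, with members $f_t$, so $f_0=f$, $f_1=g$ and $\partial F/\partial t=g-f$. Since $f,g$ and all their partial derivatives lie in $I$, the ideal $J:=\langle F,\partial F/\partial x_1,\dots,\partial F/\partial x_n\rangle$ of $\C\{{\bf x},t\}$ is contained in $I\cdot\C\{{\bf x},t\}$, while reducing modulo $t$ maps $J$ onto $\langle f,\partial f/\partial x_1,\dots,\partial f/\partial x_n\rangle=I$; hence the finitely generated $\C\{{\bf x},t\}$-module $I\cdot\C\{{\bf x},t\}/J$ is annihilated by $t$ and vanishes by Nakayama's lemma over the local ring $\C\{{\bf x},t\}$, so $J=I\cdot\C\{{\bf x},t\}$ and in particular $\partial F/\partial t\in J$. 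By the Thom--Levine criterion for contact-triviality of a one-parameter family one then writes $\partial F/\partial t=cF+\sum_i a_i\,\partial F/\partial x_i$ with the $a_i$ chosen to vanish along $\{{\bf x}=0\}$, and integrates the vector field $\partial_t+\sum_i a_i\partial_{x_i}+c$ to obtain $\psi_t\in Aut(\C\{{\bf x}\})$ and units $u_t$ with $u_t\psi_t(f)=f_t$ for $t$ near $0$; the same works near any $t_0$ with ${\rm tj}(f_{t_0})=I$. Moreover ${\rm tj}(f_t)\subseteq I$ for every $t$, so $\tau(f_t)\ge\tau(f)$, and upper semicontinuity of the Tjurina number in the algebraic family $F$ makes the bad set $B:=\{t\in\C:\tau(f_t)>\tau(f)\}$ a proper Zariski-closed — hence finite — subset of $\C$ not containing $0$ or $1$. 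Since $\C\setminus B$ is path-connected, I would join $0$ to $1$ by a path inside it, along which ${\rm tj}(f_t)\equiv I$, and chain the local trivializations by a compactness argument to conclude $f=f_0\mathop\sim\limits^c f_1=g$.

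The step I expect to be the main obstacle is the passage from the inclusion $\partial F/\partial t\in J$ to an actual contact trivialization. Two points need real care. First, the infinitesimal generator must be chosen to vanish along $\{{\bf x}=0\}$ so that its flow stays inside $Aut(\C\{{\bf x}\})$; this is exactly where the assumption $f\in\mathfrak m^2$ is used, together with a short argument absorbing the ${\bf x}$-independent parts of the coefficients $a_i$ into the ideal $\langle F\rangle$. Second, in the convergent category one must produce a genuinely convergent flow of the vector field rather than merely a formal solution, and this is where the $\mathfrak m$-primariness of $I$ enters through finite $\mathcal K$-determinacy, reducing the integration to a finite-dimensional — and hence solvable — ordinary differential equation on a jet space. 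The remaining ingredients (the chain-rule identities, the lifting of $\Phi$, the Nakayama step, and the path argument) I expect to be routine.
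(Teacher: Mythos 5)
A preliminary remark: the paper does not prove Theorem \ref{MYoriginal} at all --- it is quoted from Mather--Yau \cite{MY82}, and its refinement Theorem \ref{GLScontact} from \cite{GLS07} is used as a black box (for instance inside the proof of the characteristic-zero Proposition in Section \ref{results}). So there is no in-paper proof to measure you against; what you have written is essentially the classical argument of \cite{MY82} and \cite[Theorem 2.26]{GLS07} --- precisely the proof the authors allude to when they note that it ``uses integration of vector fields and cannot be generalized to positive characteristic.'' Your reductions are all standard and correct: the chain-rule computation for i)$\Rightarrow$ii), the lifting of the algebra isomorphism to an automorphism of $\C\{{\bf x}\}$, the reduction to ${\rm tj}(f)={\rm tj}(g)=:I$, the Nakayama argument giving $\langle F,\partial F/\partial x_1,\dots,\partial F/\partial x_n\rangle=I\cdot\C\{{\bf x},t\}$ near a parameter value where the Tjurina ideal equals $I$, and the semicontinuity-plus-connectedness globalization along the line $f+t(g-f)$.

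The one step that is not actually closed is the one you flag yourself: passing from $\partial F/\partial t\in\langle F\rangle+{\rm j}_x(F)$ to a trivializing vector field whose coefficients $a_i$ lie in $\mathfrak m$. The tangent space to the contact orbit is $\langle F\rangle+\mathfrak m\,{\rm j}_x(F)$, which is genuinely smaller than the Tjurina ideal, and your proposed remedy --- ``absorbing the ${\bf x}$-independent parts of the $a_i$ into $\langle F\rangle$'' --- does not work as stated: if, say, $f,g\in\mathfrak m^3$, then $cF$, $\sum_i\bigl(a_i-a_i(0,t)\bigr)\partial F/\partial x_i$ and $\sum_i a_i(0,t)\partial F/\partial x_i$ all lie in $\mathfrak m^2$ (indeed in $\mathfrak m^3$ resp.\ $\mathfrak m^2$), so no order count isolates, let alone absorbs, the translation part. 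The standard repair is different: the vector field $\partial_t+\sum_i a_i\partial_{x_i}$ preserves the ideal $I\cdot\C\{{\bf x},t\}=\langle F\rangle+{\rm j}_x(F)$ (a direct computation using $\partial_j(g-f)\in I$), hence in characteristic $0$ it preserves its radical $\langle x_1,\dots,x_n\rangle$ --- equivalently, the flow must carry the isolated singular point of $V(f_0)$ to that of $V(f_t)$, i.e.\ fix the origin --- which forces $a_i(0,t)=0$ automatically. This is exactly where $\tau(f)<\infty$ and $f\in\mathfrak m^2$ enter, and it is also the step with no analogue in positive characteristic. With that one repair your argument is complete.
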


The theorem was slightly generalized in \cite[Theorem 2.26]{GLS07} (without assuming isolated singularity):

\begin{theorem}\label{GLScontact} 
Let $f, g\in \mathfrak{m}\subset \C\{\bf{x}\}$. The following are equivalent:\\
i) $f\mathop  \sim \limits^c g$.\\
ii)  For all $k\ge 0$, $T_k(f)\cong T_k(g)$ as $\C$-algebras.\\
iii) There is some $k\ge 0$ such that  $T_k(f)\cong T_k(g)$ as $\C$-algebras.\\
In particular, $f\mathop  \sim \limits^c g$ iff $T(f)\cong T(g)$ as  $\C$-algebras.
\end{theorem}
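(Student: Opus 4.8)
I would begin by disposing of the routine implications (i)$\Rightarrow$(ii)$\Rightarrow$(iii). For (i)$\Rightarrow$(ii), write $g=u\varphi(f)$ with $u\in\C\{\mathbf{x}\}^{\ast}$ and $\varphi\in Aut(\C\{\mathbf{x}\})$; the chain rule together with the invertibility of the Jacobian matrix of $\varphi$ gives $\varphi({\rm j}(f))={\rm j}(\varphi(f))$, and since $\varphi(\mathfrak{m})=\mathfrak{m}$ and $\partial_{x_i}(u\varphi(f))\equiv u\,\partial_{x_i}(\varphi(f))\pmod{\langle\varphi(f)\rangle}$, one gets $\langle g,\mathfrak{m}^{k}{\rm j}(g)\rangle=\varphi\big(\langle f,\mathfrak{m}^{k}{\rm j}(f)\rangle\big)$ for every $k$, hence an induced isomorphism $T_k(f)\xrightarrow{\ \sim\ }T_k(g)$. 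The implication (ii)$\Rightarrow$(iii) is trivial, so all the content is in (iii)$\Rightarrow$(i).

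So suppose $T_k(f)\cong T_k(g)$ for some fixed $k$. Since $\C\{\mathbf{x}\}$ is, in the relevant sense, free (a $\C$-algebra isomorphism between two of its quotients lifts to an endomorphism, which is an automorphism because it is an isomorphism on Zariski cotangent spaces), one lifts the isomorphism to $\varphi\in Aut(\C\{\mathbf{x}\})$ with $\varphi(\langle f,\mathfrak{m}^{k}{\rm j}(f)\rangle)=\langle g,\mathfrak{m}^{k}{\rm j}(g)\rangle$; replacing $f$ by $\varphi(f)$ we reduce to
$$\langle f,\mathfrak{m}^{k}{\rm j}(f)\rangle=\langle g,\mathfrak{m}^{k}{\rm j}(g)\rangle=:I,$$
so in particular $g-f\in I$, and the task is to derive $f\mathop\sim\limits^{c}g$ from this equality of ideals.

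For the last step the plan is to use the homotopy method. Put $F=(1-t)f+tg\in\C\{\mathbf{x},t\}$, so $F|_{t=0}=f$ and $F|_{t=1}=g$; by the Thom--Levine integration criterion for contact equivalence, $F$ is contact-trivial, and hence $f\mathop\sim\limits^{c}g$, once $\partial F/\partial t=g-f$ lies in $\langle F\rangle+\mathfrak{m}\big\langle\partial F/\partial x_1,\dots,\partial F/\partial x_n\big\rangle$ inside $\C\{\mathbf{x},t\}$. To check this, write $g-f=cf+\sum_i m_i\,\partial f/\partial x_i$ with $c\in\C\{\mathbf{x}\}$ and $m_i\in\mathfrak{m}^{k}$ (possible since $g-f\in I$), substitute $f=F-t(g-f)$ and $\partial f/\partial x_i=\partial F/\partial x_i-t\,\partial_{x_i}(g-f)$, and use that $1+ct$ is a unit; this yields
$$g-f\ \in\ \langle F\rangle+\mathfrak{m}\big\langle\partial F/\partial x_i\big\rangle+t\,\mathfrak{m}^{k}\big\langle\partial_{x_i}(g-f)\big\rangle .$$
One then re-expresses each further $\mathbf{x}$-derivative of $g-f$ through $f=F-t(g-f)$ and iterates, the aim being that for $k\ge1$ the newly produced terms are either absorbed into $\langle F\rangle+\mathfrak{m}\langle\partial F/\partial x_i\rangle$ or pushed into ever higher powers $\mathfrak{n}^{N}$ of the maximal ideal $\mathfrak{n}$ of $\C\{\mathbf{x},t\}$, so that Krull's intersection theorem gives the required membership. (Equivalently, after dividing $g$ by a unit one recognises $g$ as $f$ precomposed to first order with $x_i\mapsto x_i+m_i$, $m_i\in\mathfrak{m}^{k}$, and runs the corresponding Newton iteration.)

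The main obstacle is exactly this last step: organising the iteration so that the higher $\mathbf{x}$-derivatives of $g-f$ --- which a priori need not lie in $I$ --- stay controlled by the accumulating factors of $t$ and of $\mathfrak{m}$. This is a "finite $\mathcal{K}$-determinacy relative to an ideal" estimate, and the point is that it uses the full strength of the equality $\langle f,\mathfrak{m}^{k}{\rm j}(f)\rangle=\langle g,\mathfrak{m}^{k}{\rm j}(g)\rangle$ rather than mere $\mathfrak{m}$-adic closeness of $f$ and $g$; it is precisely here that characteristic will later intervene, the $\mathfrak{m}$-order gained for free from $k\ge1$ over $\C$ being replaced by an effective bound in characteristic $p$. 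I would leave two further points to standard theory: the case $k=0$, i.e. $T(f)\cong T(g)$, is not reached by the homotopy --- there $g-f$ lies only in ${\rm tj}(f)$, not in the smaller contact tangent space $\langle f\rangle+\mathfrak{m}\,{\rm j}(f)$ --- and for $\tau(f)<\infty$ it is Theorem~\ref{MYoriginal} itself, the (possibly non-isolated) subcase needing a separate argument; and the Thom--Levine vector field, which vanishes on $\{\mathbf{x}=0\}$, can be integrated over all of $t\in[0,1]$, not just near $0$.
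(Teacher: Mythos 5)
Note first that the paper does not prove this theorem itself: it is quoted from \cite[Theorem 2.26]{GLS07}, and the authors only remark that its proof ``uses integration of vector fields and cannot be generalized to positive characteristic''. Your architecture is indeed that of the cited proof --- the chain--rule computation for (i)$\Rightarrow$(ii), the Lifting Lemma to reduce (iii) to an equality of ideals $\langle f,\mathfrak{m}^k{\rm j}(f)\rangle=\langle g,\mathfrak{m}^k{\rm j}(g)\rangle=:I$, the path $F=(1-t)f+tg$, and the Thom--Levine criterion. But the step you yourself call ``the main obstacle'' is a genuine gap, not a technicality, and the iteration you sketch does not close. Two concrete failures: (a) after one substitution you are left with $t\sum_i m_i\,\partial_{x_i}(g-f)$, and differentiating the relation $g-f=cf+\sum_i m_i\partial f/\partial x_i$ produces terms $m_i\,\partial^2 f/\partial x_j\partial x_i$ involving second derivatives of $f$, which are controlled neither by $I$ nor by $\langle F\rangle+\mathfrak{m}\,{\rm j}_{\mathbf x}(F)$; (b) the powers of $t$ you accumulate lie in the maximal ideal only at the point $t_0=0$, whereas the infinitesimal criterion must be verified at every $t_0\in[0,1]$; for $t_0\neq 0$ the element $t$ is a unit and your Krull-intersection argument collapses. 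The repair is not a finer iteration but an input you never use: since $\mathfrak{m}^k{\rm j}(f)\subseteq I$ \emph{and} $\mathfrak{m}^k{\rm j}(g)\subseteq I$, one has $\mathfrak{m}^k\partial_{x_i}(g-f)\subseteq I$; hence at any $t_0$ with $\langle h_{t_0},\mathfrak{m}^k{\rm j}(h_{t_0})\rangle=I$ one gets $I\cdot\C\{\mathbf x,t-t_0\}\subseteq\langle F,\mathfrak{m}^k{\rm j}_{\mathbf x}(F)\rangle+(t-t_0)\,I\cdot\C\{\mathbf x,t-t_0\}$, and Nakayama gives $I=\langle F,\mathfrak{m}^k{\rm j}_{\mathbf x}(F)\rangle$ locally; for $k\ge 1$ this ideal is contained in $\langle F\rangle+\mathfrak{m}\,{\rm j}_{\mathbf x}(F)$, which is exactly the required membership, with no iteration at all. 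One then checks by semicontinuity that $\langle h_t,\mathfrak{m}^k{\rm j}(h_t)\rangle=I$ off a finite subset of $\C$ (it holds at $t=0,1$ by hypothesis, and $\subseteq$ holds everywhere), and concludes by connectedness of the complement.

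The second gap is the case $k=0$, which you explicitly leave open. The homotopy method only yields $g-f\in{\rm tj}(h_t)$, not $g-f\in\langle h_t\rangle+\mathfrak{m}\,{\rm j}(h_t)$, as you correctly note; but deferring to Theorem \ref{MYoriginal} does not cover the statement as given, since that theorem assumes $\tau(f)<\infty$ while Theorem \ref{GLScontact} does not, and the final sentence of the theorem (``$f\mathop\sim\limits^{c}g$ iff $T(f)\cong T(g)$'') is precisely the $k=0$ case. This subcase requires the additional Mather--Yau-type argument (showing one may correct the lift so that $g-f$ lands in the smaller tangent space, or replacing $g$ by a suitable unit multiple), and without it the proof of the theorem as stated is incomplete. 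Your (i)$\Rightarrow$(ii) direction, by contrast, is correct and agrees with the computation the paper itself performs in the proof of Theorem \ref{contact}.
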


However, the theorem is not true if $K$ has characteristic $p>0$ in general, as was already noted by Mather and Yau \cite{MY81}. For $f=x^{p+1} + y^{p+1}$ and $g=f+x^p$ we have $f\mathop {\not  \sim }\limits^c g$ but $T(f)=T(g)$.  

It is also known by \cite[Theorem 2]{Yau84} that over the complex numbers the Milnor algebra $M(f)$ determines $f$ up to right equivalence, if we consider $M(f)$ as $\C\{t\}$-algebra where $t$ acts by multiplication with $f$ (but not as $\C$-algebra). 
The following generalization can be deduced from Theorem {\ref{GLScontact}} (cf. \cite[Theorem 2.28]{GLS07}):

\begin{theorem}\label{GLSright}
Let $f, g\in \mathfrak{m}\subset \C\{{\bf{x}}\}$ be hypersurface singularities. Then the following are equivalent:\\
i) $f\mathop  \sim \limits^r g$.\\
ii) For all $k\ge 0$, $M_k(f)\cong M_k(g)$ as $\C\{t\}$-algebras.\\
iii) There is some $k\ge 0$ such that  $M_k(f)\cong M_k(g)$ as $\C\{t\}$-algebras.\\
In particular, $f\mathop  \sim \limits^r g$ iff $M(f)\cong M(g)$ as $\C\{t\}$-algebras.
\end{theorem}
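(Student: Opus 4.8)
The plan is to deduce the statement from its contact counterpart, Theorem~\ref{GLScontact}.

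The implications $(i)\Rightarrow(ii)\Rightarrow(iii)$ are the routine part. If $g=\varphi(f)$ for some $\varphi\in\mathrm{Aut}(\C\{{\bf x}\})$, then $\varphi(\mathfrak m)=\mathfrak m$, and the chain rule together with the invertibility of the Jacobian matrix of $\varphi$ gives $\varphi({\rm j}(f))={\rm j}(g)$, hence $\varphi(\mathfrak m^k{\rm j}(f))=\mathfrak m^k{\rm j}(g)$; thus $\varphi$ induces a $\C$-algebra isomorphism $M_k(f)\to M_k(g)$, and since $\varphi(f)=g$ it carries the residue class $\overline f$ to $\overline g$, so it is $\C\{t\}$-linear. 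That $(ii)$ implies $(iii)$ is trivial.

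For $(iii)\Rightarrow(i)$, fix $k\ge 0$ and an isomorphism $\psi\colon M_k(f)\to M_k(g)$ of $\C\{t\}$-algebras. Since $\psi$ is $\C\{t\}$-linear, $\psi\bigl(fM_k(f)\bigr)=gM_k(g)$, so $\psi$ descends to a $\C$-algebra isomorphism $T_k(f)=M_k(f)/fM_k(f)\cong M_k(g)/gM_k(g)=T_k(g)$, and Theorem~\ref{GLScontact} already yields $f\mathop \sim \limits^c g$. The case $k=0$ is exactly \cite[Theorem~2]{Yau84}, so assume $k\ge 1$; then $\mathfrak m^k{\rm j}(f),\mathfrak m^k{\rm j}(g)\subseteq\mathfrak m^2$. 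Observing that a $\C\{t\}$-algebra isomorphism is precisely a $\C$-algebra isomorphism carrying $\overline f$ to $\overline g$, I would lift the underlying $\C$-algebra isomorphism $\psi$ to an automorphism $\Psi\in\mathrm{Aut}(\C\{{\bf x}\})$: choose preimages $h_i$ of $\psi(\overline{x_i})$ and set $\Psi(x_i)=h_i$; this is an automorphism because $\psi$ induces an isomorphism on cotangent spaces, it satisfies $\Psi(\mathfrak m^k{\rm j}(f))=\mathfrak m^k{\rm j}(g)$, and $\psi(\overline f)=\overline g$ forces $\Psi(f)-g\in\mathfrak m^k{\rm j}(g)$. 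Replacing $f$ by the right-equivalent series $\Psi(f)$, it remains to prove: if $\mathfrak m^k{\rm j}(f)=\mathfrak m^k{\rm j}(g)$ and $f-g\in\mathfrak m^k{\rm j}(g)$, then $f\mathop \sim \limits^r g$.

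This last step is a finite-determinacy statement, to be established by the homotopy method: put $g_s:=(1-s)g+sf$ for $s\in[0,1]$, write $f-g=\sum_i a_i\,\partial g/\partial x_i$ with $a_i\in\mathfrak m^k$, and — using for $k\ge 2$ that the matrix $\bigl(\delta_{ij}+s\,\partial a_i/\partial x_j\bigr)_{i,j}$ is $\equiv\mathrm{id}\pmod{\mathfrak m}$ and hence invertible over $\C\{{\bf x}\}$, together with the equality $\mathfrak m^k{\rm j}(f)=\mathfrak m^k{\rm j}(g)$ and a bound $\mathfrak m^N\subseteq{\rm j}(g_s)$ uniform in $s\in[0,1]$ — deduce that $f-g\in\mathfrak m\cdot{\rm j}(g_s)$ for every $s\in[0,1]$; integrating the corresponding $s$-dependent vector field, which vanishes at the origin, then produces $\varphi_s\in\mathrm{Aut}(\C\{{\bf x}\})$ with $\varphi_0=\mathrm{id}$ and $\varphi_s(g_s)=g$, so that $\varphi_1(f)=g$. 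The remaining value $k=1$ (where the matrix above may fail to be invertible) needs a small variant, arguing with the lowest-order terms of $f$. I expect this finite-determinacy step to be the main obstacle: the delicate point is to secure $f-g\in\mathfrak m\cdot{\rm j}(g_s)$ along the \emph{whole} segment $s\in[0,1]$, not merely near $s=0$, and for this the equality of ideals $\mathfrak m^k{\rm j}(f)=\mathfrak m^k{\rm j}(g)$ and control of the order of determinacy of the (isolated) singularities $g_s$ are essential. An alternative, staying even closer to Theorem~\ref{GLScontact}, is to use $f\mathop \sim \limits^c g$ to reduce to the case $g=uf$ with $u$ a unit, and then analyse the path $\bigl(1+s(u-1)\bigr)f$ from $f$ to $g$, all of whose members define the same analytic germ.
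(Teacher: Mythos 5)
A preliminary remark: the paper does not actually prove Theorem \ref{GLSright}; it quotes it from \cite[Theorem 2.28]{GLS07} and only records that it ``can be deduced from Theorem \ref{GLScontact}'', the relevant technique being integration of vector fields --- which is precisely what forces the authors to argue differently (via Theorem \ref{BGM12}) in positive characteristic. So your attempt can only be measured against the intended GLS07-style argument and against the paper's proof of the characteristic-$p$ analogue, Theorem \ref{right}. Your reductions are the right ones and agree with both: (i)$\Rightarrow$(ii) is correct; passing from the $\C\{t\}$-isomorphism of $M_k$ to a $\C$-algebra isomorphism of $T_k$ is correct; and lifting to $\Psi\in \mathrm{Aut}(\C\{{\bf x}\})$ so that one may assume $\mathfrak m^k{\rm j}(f)=\mathfrak m^k{\rm j}(g)=:J$ and $f-g\in J$ is exactly the reduction used in the proof of Theorem \ref{right}.

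The gap is in the last step, which you have correctly located but not closed. First, your mechanism cannot deliver $f-g\in\mathfrak m\,{\rm j}(g_s)$: differentiating $g_s=g+s\sum_i a_i\,\partial g/\partial x_i$ produces, besides the terms governed by the matrix $\bigl(\delta_{ij}+s\,\partial a_i/\partial x_j\bigr)$, the Hessian terms $s\sum_i a_i\,\partial^2 g/\partial x_i\partial x_j$; these lie in $\mathfrak m^k\cdot\bigl\langle \partial^2 g/\partial x_i\partial x_j\bigr\rangle$ and are \emph{not} in $J$ unless $k$ is large compared with the colength of ${\rm j}(g)$ --- i.e.\ precisely in the regime of Theorem \ref{right}, whereas here $k$ is arbitrary (e.g.\ $k=1$). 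Second, the containment along the \emph{whole} segment $[0,1]$ is not what one should try to prove, and it can fail at special parameters. The standard repair is: from $\partial g_s/\partial x_j=(1-s)\,\partial g/\partial x_j+s\,\partial f/\partial x_j$ one gets the easy inclusion $\mathfrak m^k{\rm j}(g_s)\subseteq \mathfrak m^k{\rm j}(f)+\mathfrak m^k{\rm j}(g)=J$ for \emph{every} $s\in\C$; by upper semicontinuity of $\dim_\C \C\{{\bf x}\}/\mathfrak m^k{\rm j}(g_s)$ the set $A=\{s\in\C:\ \mathfrak m^k{\rm j}(g_s)=J\}$ is Zariski-open, contains $0$ and $1$, hence is connected; for $s_0\in A$ one has $f-g\in J=\mathfrak m^k{\rm j}(g_{s_0})\subseteq\mathfrak m\,{\rm j}(g_{s_0})$, and the infinitesimal triviality criterion (after promoting this fibrewise containment to one over $\C\{{\bf x},s-s_0\}$ by Nakayama) gives local right-triviality near $s_0$; connectedness of $A$ then yields $f=g_1\mathop\sim\limits^r g_0=g$. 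Your alternative via $g=uf$ and the path $\bigl(1+s(u-1)\bigr)f$ is weaker still: $1+s(u-1)$ need not be a unit for all $s\in[0,1]$, and the triviality criterion there would require $f\in\mathfrak m\,{\rm j}(f)$, which holds only for quasi-homogeneous $f$.
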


The same $f$ and $g$ as in the example above are not right equivalent but we have ${\rm {j}}(f)={\rm {j}}(g)$ and hence $M_k(f)=M_k(g)$ for all $k$ as $K$-algebras. Moreover, considered as $K[[t]]$-algebras, $M(f)\cong M(g)$.

Our aim is to see how far the Mather-Yau theorem and Theorem {\ref{GLSright}} hold in the case of positive characteristic. In order to do that we need some additional notions.

For $k\in\N$ we say that $f$ is \textit{right} (respectively \textit {contact}) $k$\textit {-determined} if it is right (respectively contact) equivalent to every $g\in K[[{\bf{x}}]]$ satisfying $g-f\in\mathfrak{m}^{k+1}$. We denote by $ord(f)$ the order (or multiplicity) of $f$, i.e. the maximal $l$ such that $f\in \mathfrak{m}^l$, which is invariant under right and contact equivalence.

Note that the proof given by Mather and Yau (as well as in \cite{GLS07}) uses integration of vector fields and cannot be generalized to positive characteristic. To prove an appropriate generalization of Theorem {\ref{MYoriginal}} - {\ref{GLSright}} in positive characteristic we need the finite determinacy theorem proved in \cite{BGM12}.

\begin{theorem}[{\cite[Theorem 3]{BGM12}}]\label{BGM12}
Let $0\ne f\in \mathfrak{m}^2$ and $k\in \N$.\\
i) If $\mathfrak{m}^{k+2}\subset \mathfrak{m}^2{\rm {j}}(f)$ then $f$ is right $(2k-ord(f)+2)$-determined.\\
ii) If $\mathfrak{m}^{k+2}\subset \mathfrak{m}\left\langle f\right\rangle +\mathfrak{m}^2{\rm {j}}(f)$ then $f$ is contact $(2k-ord(f)+2)$-determined.
\end{theorem}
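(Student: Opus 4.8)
The plan is to replace the integration of vector fields---which is unavailable in positive characteristic---by an explicit, $\mathfrak m$-adically convergent Newton-type iteration built from small coordinate changes (and, in the contact case, from small unit multiplications as well). Write $r=ord(f)$ and $d=2k-r+2$. One first notes that the hypothesis of i) forces $k\ge r-1$: since $f\in\mathfrak m^r$ we have ${\rm j}(f)\subset\mathfrak m^{r-1}$, so $\mathfrak m^{k+2}\subset\mathfrak m^2{\rm j}(f)\subset\mathfrak m^{r+1}$ and hence $k+2\ge r+1$; consequently $d\ge k+1$ and $l+1-k\ge 2$ for every $l\ge d$. Given $g$ with $g-f\in\mathfrak m^{d+1}$, I would construct automorphisms $\varphi_0,\varphi_1,\dots$ tending $\mathfrak m$-adically to the identity together with power series $g_0=g,\,g_1,\,g_2,\dots$ such that $g_{n+1}=\varphi_n^{-1}(g_n)$ (so $g_{n+1}\mathop{\sim}\limits^{r}g_n$) and $g_n-f\in\mathfrak m^{l_n+1}$ with $l_n\to\infty$. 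By completeness of $K[[{\bf x}]]$ the composition $\varphi_n^{-1}\circ\cdots\circ\varphi_0^{-1}$ then converges (as $n\to\infty$) to an automorphism $\Psi$ with $\Psi(g)=\lim_{n}g_n=f$, proving i); part ii) is the same iteration performed in the contact group, where at each stage one additionally multiplies by a unit $v_n\to 1$.

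For the inductive step of i), suppose $h:=g_n-f\in\mathfrak m^{l+1}$ with $l\ge d$. Using $\mathfrak m^{l+1}=\mathfrak m^{l-1-k}\mathfrak m^{k+2}\subset\mathfrak m^{l+1-k}{\rm j}(f)$ one may write $h=\sum_i\eta_i\,\partial f/\partial x_i$ with $\eta_i\in\mathfrak m^{l+1-k}\subset\mathfrak m^2$, and set $\varphi_n(x_i)=x_i+\eta_i$, which is an automorphism. Expanding $\varphi_n(f)=f(x_1+\eta_1,\dots,x_n+\eta_n)$ via Hasse--Schmidt (divided-power) derivatives---so as to avoid dividing by $|\beta|!$ in characteristic $p$---gives $\varphi_n(f)=f+\sum_i\eta_i\,\partial f/\partial x_i+E_n$ with $E_n=\sum_{|\beta|\ge 2}D^{[\beta]}f\cdot\eta^\beta$. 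Since $D^{[\beta]}f\in\mathfrak m^{\max(r-|\beta|,0)}$ for $f\in\mathfrak m^r$, the summand with $|\beta|=2$ lies in $\mathfrak m^{r-2}\mathfrak m^{2(l+1-k)}=\mathfrak m^{r+2l-2k}$, and a short order count shows this is the minimal order occurring; hence $E_n\in\mathfrak m^{r+2l-2k}$. As substitution by an automorphism is $K$-linear and preserves $\mathfrak m$-order, and $h=\sum_i\eta_i\,\partial f/\partial x_i$, one gets $g_n-\varphi_n(f)=-E_n$, so $g_{n+1}=\varphi_n^{-1}(g_n)=\varphi_n^{-1}(\varphi_n(f))-\varphi_n^{-1}(E_n)=f-\varphi_n^{-1}(E_n)$, i.e. $g_{n+1}-f\in\mathfrak m^{r+2l-2k}$; thus one may take $l_{n+1}=r+2l_n-2k-1$.

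The decisive numerical fact is that $r+2l-2k\ge l+2$ holds \emph{exactly} for $l\ge 2k-r+2=d$: this is why the iteration is self-improving precisely from degree $d$ onward and why the determinacy degree comes out to be $d$. Starting from $l_0=d$ one finds $l_n=d+2^n-1\to\infty$, correspondingly $\eta_i^{(n)}\in\mathfrak m^{k-r+2+2^n}$, so the $\varphi_n$ tend to the identity fast enough for the infinite composition, and for $\lim_n g_n=f$, to make sense. For ii) one uses instead $\mathfrak m^{l+1}\subset\mathfrak m^{l-k}\langle f\rangle+\mathfrak m^{l+1-k}{\rm j}(f)$ to write $h=cf+\sum_i\eta_i\,\partial f/\partial x_i$ with $c\in\mathfrak m^{l-k}\subset\mathfrak m$, keeps the same $\varphi_n$, and replaces $g_{n+1}=\varphi_n^{-1}(g_n)$ by $g_{n+1}=v_n^{-1}\varphi_n^{-1}(g_n)$ with the unit $v_n=\varphi_n^{-1}(1+c)$; then $(1+c)\varphi_n(f)-g_n=c\sum_i\eta_i\,\partial f/\partial x_i+(1+c)E_n$, whose first summand lies in $\mathfrak m^{(l-k)+(l+1-k)+(r-1)}=\mathfrak m^{r+2l-2k}$, so the same estimates and argument give $g_{n+1}-f\in\mathfrak m^{r+2l-2k}$, $g_{n+1}\mathop{\sim}\limits^{c}g_n$, and in the limit $f\mathop{\sim}\limits^{c}g$.

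The only place where positive characteristic genuinely bites is the bookkeeping: the expansion of $f(x_1+\eta_1,\dots,x_n+\eta_n)$ must be read through Hasse--Schmidt derivatives rather than ordinary Taylor coefficients, but the single property needed, $D^{[\beta]}f\in\mathfrak m^{\max(r-|\beta|,0)}$, survives unchanged; and the convergence of the infinite composition of the $\varphi_n$ (and of the units $v_n$ in ii)) uses nothing beyond $\mathfrak m$-adic completeness of $K[[{\bf x}]]$, in particular no integration. I therefore expect the substantive part to be purely the elementary order arithmetic sketched above, together with---for a fully self-contained account---making precise the $\mathfrak m$-adic convergence statement for the infinite composition of automorphisms.
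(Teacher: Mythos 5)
The paper does not actually prove this statement: it is imported verbatim from \cite[Theorem 3]{BGM12}, so there is no in-paper proof to compare against. Your Newton-type iteration --- replacing the integration of vector fields by successive coordinate changes $x_i\mapsto x_i+\eta_i$ with the $\eta_i$ read off from $\mathfrak m^{l+1}\subset\mathfrak m^{l+1-k}{\rm j}(f)$ (resp.\ $\mathfrak m^{l+1}\subset\mathfrak m^{l-k}\langle f\rangle+\mathfrak m^{l+1-k}{\rm j}(f)$), with the error controlled via Hasse--Schmidt derivatives and $\mathfrak m$-adic convergence of the composed automorphisms --- is essentially the argument of that reference, and your order arithmetic (in particular $r+2l-2k\ge l+2$ exactly for $l\ge 2k-r+2$, which pins down the determinacy bound) is correct.
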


%%%%%%%%%%%%%%%%%%%%%%%%%%%%
%%%%%%%%%%%%%%%%%%%%%%%%%%%%
\section{Results}\label{results}
Let us first consider the case of an arbitrary algebraically closed field $K$ of characteristic 0.

\begin{proposition}
Let $K$ be an algebraically closed field of characteristic 0 and $f, g \in\mathfrak{m}\subset K[[{\bf{x}}]]$ with $f$ an isolated singularity. 
Then the statements of Theorem {\ref{GLScontact}} (resp. Theorem {\ref{GLSright}}) hold 
if we replace $\C$ by $K$, $\C\{{\bf{x}}\}$ by $K[[{\bf{x}}]]$, and $\C\{t\}$ by $K[[t]]$.
\end{proposition}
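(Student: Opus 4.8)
The plan is to reduce the characteristic-zero statement for a general algebraically closed field $K$ to the classical complex-analytic case, Theorems \ref{GLScontact} and \ref{GLSright}, by a Lefschetz-principle / base-change argument, using the finite determinacy theorem (Theorem \ref{BGM12}) to make everything take place in a finitely generated subring of $K$. First I would observe that the implications i) $\Rightarrow$ ii) $\Rightarrow$ iii) are formal and characteristic-free: if $f \sim^c g$ via $\varphi$ and a unit $u$, then $\varphi$ carries $\langle f, \mathfrak{m}^k \mathrm{j}(f)\rangle$ onto $\langle g, \mathfrak{m}^k \mathrm{j}(g)\rangle$ (here one uses that $\varphi$ is a graded-compatible automorphism, so it preserves $\mathfrak m$, and that $\mathrm j(uf) \subseteq \langle f \rangle + \mathrm j(f)$, with the reverse after multiplying by the unit), hence $T_k(f) \cong T_k(g)$ as $K$-algebras; likewise for the Milnor side, $\varphi$ with $g = \varphi(f)$ gives $\mathrm j(g) = \varphi(\mathrm j(f))$ and commutes with multiplication by $f$ up to the identification $t \mapsto \varphi$, yielding the $K[[t]]$-algebra isomorphism. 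So the whole content is the implication iii) $\Rightarrow$ i).

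The key step is iii) $\Rightarrow$ i). Assume $T_k(f) \cong T_k(g)$ for some fixed $k$ (the right-equivalence case is parallel, using $M_k$). Since $f$ is an isolated singularity, $\tau(f) < \infty$, so $\mathfrak m^N \subseteq \mathrm{tj}(f)$ for some $N$, and by Theorem \ref{BGM12}(ii) $f$ is contact finitely determined, say $c$-determined; similarly the dimension count $\dim_K T_k(g) = \dim_K T_k(f) < \infty$ forces $g$ to be contact finitely determined as well (an isolated singularity, with the same determinacy bound once $k$ is large enough, or after first passing to $k' \gg k$ using that $T_{k'}(f)\cong T_{k'}(g)$ follows once we know $f\sim^c g$ — in fact it is cleanest to run the argument directly for the given $k$ and let $c$ be the maximum of the two determinacy bounds read off from $\dim_K T_k$). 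Replacing $f$ and $g$ by their truncations modulo $\mathfrak m^{c+1}$, which does not change the contact-equivalence class nor the algebras $T_k$, we may assume $f, g \in K[\mathbf x]$ are polynomials. Now choose a finitely generated $\Q$-subalgebra $R \subseteq K$ containing all coefficients of $f$ and $g$ and the entries of a matrix realizing the isomorphism $T_k(f) \cong T_k(g)$ together with its inverse; enlarging $R$ if necessary we may assume the relevant quotient modules are free $R$-modules of the expected rank and the isomorphism persists after $\otimes_R K$. Fix a ring embedding $R \hookrightarrow \C$ (possible since $R$ is a domain of characteristic $0$, finitely generated, hence embeds into $\C$), and let $f_\C, g_\C$ be the images. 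Base change is flat here in the relevant degrees, so $T_k(f_\C) \cong T_k(g_\C)$ as $\C$-algebras. By Theorem \ref{GLScontact}, $f_\C \sim^c g_\C$: there exist $\varphi_\C \in \mathrm{Aut}(\C[[\mathbf x]])$ and a unit $u_\C$ with $g_\C = u_\C \varphi_\C(f_\C)$. By finite determinacy of $g_\C$ (same bound $c$, since the algebras agree) we may take $\varphi_\C$ to be a polynomial automorphism of degree $\le c$ and $u_\C$ a polynomial unit, i.e. the relation $g_\C = u_\C \varphi_\C(f_\C)$ is a finite system of polynomial equations over $\C$ in the coefficients of $\varphi_\C$ and $u_\C$. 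This system has coefficients in $R$ and a solution over $\C$, hence — since $R \hookrightarrow \C$ and the system is defined over $R$, which is a subring of $\C$ — the scheme of solutions is a nonempty $R$-scheme of finite type; it has a point over some finite extension, and then over $\bar K$ after choosing $R \hookrightarrow \bar K = K$. Thus $g = u\varphi(f)$ for suitable $\varphi \in \mathrm{Aut}(K[[\mathbf x]])$ and unit $u \in K[[\mathbf x]]^*$, i.e. $f \sim^c g$.

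The main obstacle is the descent at the end: having a solution over $\C$ of the polynomial system "$g = u\varphi(f)$" does not immediately give one over $K$, because $\C$ and $K$ are two different algebraically closed extensions of $R$ with no map between them. The clean way around this is that both $\C$ and $K$ contain $R$, both are models of the theory ACF$_0$ of algebraically closed fields of characteristic zero, and existence of a solution to a finite system of polynomial equations and inequations with parameters in $R$ is a first-order statement over $R$; by completeness (quantifier elimination) of ACF$_0$, it holds in $\C$ iff it holds in $K$. Alternatively, avoid logic: nonemptiness of the finite-type $R$-scheme of solutions is detected after any faithfully flat base change, and both $R \to \C$ and $R \to K$ hit the generic point, so the scheme is nonempty over $R$ and hence over $K$. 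A secondary technical point to get right is the transition between the polynomial truncations and the power series: one must check that $T_k$ of a power series and of its degree-$c$ truncation agree for $c$ at least, say, $k + N$ where $\mathfrak m^N \subseteq \mathrm j(f)$, and that the determinacy bound from Theorem \ref{BGM12}(ii) can genuinely be read off from $\dim_K T_k(f)$ alone (take $k$-th power of the maximal ideal inside the ideal, bound the needed $k$ in Theorem \ref{BGM12} by $\dim_K T_k(f)$); this is where characteristic $0$ is not actually used — it is only used to invoke Theorem \ref{GLScontact} over $\C$ — so the same skeleton will serve, with a worse bound, in the positive-characteristic part of the paper.
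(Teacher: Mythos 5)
Your overall strategy is the same as the paper's: a Lefschetz-principle argument that descends the data to a ``small'' subobject of $K$, embeds it into $\C$, applies Theorem \ref{GLScontact} (resp.\ Theorem \ref{GLSright}) there, and transports the resulting contact (resp.\ right) equivalence back to $K$. The two implementations differ in both descents, however. The paper lifts the isomorphism $T_k(f)\cong T_k(g)$ to an automorphism of $K[[{\bf x}]]$ by the Lifting Lemma, collects the countably many relevant coefficients into a countable subfield $K'=\Q(A)$, and embeds $K'$ into $\C$; working over a field avoids all flatness and freeness issues, but the price is paid at the end, where the relation $\tilde g=u\psi(\tilde f)$ is an infinite system of equations that must be handled degree by degree (the ideals $I_q$, the Nullstellensatz at each level, and a compatible tower of finite extensions $K'(A_q)\cong\tilde K(B_q)$ to carry a solution back into $K$). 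You instead invoke finite determinacy at the outset to replace $f$ and $g$ by polynomial representatives and descend to a finitely generated $\Q$-subalgebra $R\subset K$ (which requires the spreading-out care you acknowledge), so that the final descent becomes a single finite system of polynomial equations and inequations over $R$, settled by model completeness of ${\rm ACF}_0$ or by the Nullstellensatz over $\overline{{\rm Frac}(R)}\subset K$. Your version of the final descent is cleaner: it avoids the paper's implicit appeal to the nonemptiness of an inverse limit of nonempty solution sets and the bookkeeping of compatible algebraic extensions.

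Two points need repair. First, truncation does not give the exact identity $g_\C=u_\C\varphi_\C(f_\C)$ with $\varphi_\C$ and $u_\C$ polynomial; it only gives $g_\C\equiv u_\C\varphi_\C(f_\C)$ modulo $\mathfrak m^{c+1}$. The finite system over $R$ must therefore be this congruence (equations on the coefficients up to degree $c$, plus the inequations $u(0)\ne 0$ and $\det D\varphi(0)\ne 0$), and after transporting a solution to $K$ you must apply contact $c$-determinacy of $g$ over $K$ (Theorem \ref{BGM12}, which holds over any field) once more to conclude $f\mathop\sim\limits^c g$. Since determinacy is already in your toolkit this is an imprecision rather than a gap, but as literally written the exact-equality system could be unsolvable. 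Second, the closing remark that the same skeleton serves in positive characteristic is misleading: the passage through $\C$ is intrinsically a characteristic-$0$ device, and Theorems \ref{contact} and \ref{right} are proved by a different, direct argument in which only the determinacy input from Theorem \ref{BGM12} is shared.
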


\begin{proof} Note that (in characteristic 0) $\tau(f)<\infty$ iff $\mu(f)<\infty$ and this is equivalent to $f$ having an isolated singularity. Moreover, since the difference between $\tau(f)$ and $dim_KT_k(f)$ (resp. $\mu(f)$ and $dim_KM_k(f)$) is finite, it follows that any of the equivalent statements of Theorem {\ref{GLScontact}} (resp. Theorem {\ref{GLSright}}) implies that $g$ has also an isolated singularity. 

The proof uses a "Lefschetz principle", that is we prove it through the complex case. We give the proof only for the interesting direction that iii) implies i). Moreover, we consider only the case of contact equivalence, the proof for right equivalence works along the same lines. 

Assume that $f, g\in \mathfrak{m}\subset K[[{\bf{x}}]]$ be such that the finite dimensional $K$-algebras $T_k(f)$ and $T_k(g)$ are isomorphic for some $k\ge 0$. By the Lifting lemma \cite[Lemma 1.23]{GLS07}, this isomorphism lifts to a $K$-algebra automorphism $\theta: K[[{\bf x}]]\to K[[{\bf x}]]$ such that $\theta \big(\big\langle f, \mathfrak{m}^k{\rm j}(f)\big\rangle\big)=\big\langle g, \mathfrak{m}^k{\rm j}(g)\big\rangle$.  
 
Let $l_0,..., l_r$ be the generators $f, m_j\cdot \frac{\partial f}{\partial x_i}$ of the ideal $\big\langle f, \mathfrak{m}^k{\rm j}(f)\big\rangle$, with $m_j$ generators of $\mathfrak{m}^k$, and $w_0,..., w_r$ analogous generators of $\big\langle g, \mathfrak{m}^k{\rm j}(g)\big\rangle$. Then for $i=0,..., r$, we have 
$$\theta(l_i)=b_{0,i}w_0+...+b_{r,i}w_r, \hskip 10pt \theta^{-1}(w_i)=d_{0,i}l_0+...+d_{r,i}l_r$$
for some $b_{j,i}$, $d_{j,i}\in K[[{\bf x}]]$, $j=0,..., r$.

Let $A\subset K$ be the set consisting of all coefficients of $l_i$ and $w_i$, $i=0,..., r$, the coefficients of $\theta(x_t)$, $t=1,...,n$, and the coefficients of $b_{j,i}$ and $d_{j,i}$, $j, i=0,..., r$. Let $K':=\Q (A)\subset K$ be the subfield generated by $A$. Then $l_i$, $w_i$, $i=0,..., r$, are in $K'[[{\bf x}]]$ and $\theta$ induces a $K'$-algebra automorphism $\theta: K'[[{\bf{x}}]]\to K'[[{\bf{x}}]]$. If $I=\langle l_0,..., l_r\rangle \cdot K'[[{\bf x}]]$ and $J=\langle w_0,..., w_r\rangle \cdot K'[[{\bf x}]]$ denote the ideals in $K'[[{\bf x}]]$, then $\theta(I)=J$. Hence, we obtain a $K'$-algebra isomorphism $K'[[{\bf{x}}]]/I\cong K'[[{\bf{x}}]]/J.$

The countable extension $K'$ of $\Q $ is $\Q $-isomorphic to a subfield $\tilde K$ of $\C$, hence there is an isomorphism $\varphi: K'[[{\bf{x}}]] \to {\tilde K}[[{\bf{x}}]] $ such that $\varphi |_{\Q[[{\bf{x}}]]}=id_{\Q[[{\bf{x}}]]}$. Let $\tilde f=\varphi(f)$ and $\tilde g=\varphi(g)$. Furthermore, set $\tilde I:=\big\langle \tilde f, \mathfrak{m}^k{\rm j}(\tilde f)\big\rangle\cdot \tilde K[[{\bf{x}}]]$ and $\tilde J:=\big\langle \tilde g, \mathfrak{m}^k{\rm j}(\tilde g)\big\rangle\cdot \tilde K[[{\bf{x}}]]$. Then $\varphi(I)=\tilde I$ and $\varphi(J)=\tilde J$, and thus we get a $\tilde K$-algebra isomorphism $\tilde K[[{\bf{x}}]]/\tilde I\cong \tilde K[[{\bf{x}}]]/\tilde J.$ By taking the complete tensor product with $\C$ over the field $\tilde K$, this $\tilde K$-algebra  isomorphism extends to $\C$ so that $T_k(\tilde f)\cong T_k(\tilde g)$ as $\C$-algebras. Since both algebras are finite dimensional, $\tilde f$ and $\tilde g$ have isolated singularities and are therefore finitely determined. 

Hence $\tilde f$ and $\tilde g$ are formally contact equivalent to polynomials, in particular to convergent power series, and we can apply Theorem {\ref{GLScontact}}, providing convergent contact equivalence. Altogether, there are $\psi\in Aut(\C[[{\bf{x}}]])$ and a unit $u$, with  \vskip 8pt \hskip60pt $\psi_t=\psi(x_t)=\sum\limits_{\gamma}c_{\gamma}^{(t)}{\bf{x}}^{\gamma}$, \hskip 5pt and \hskip 3pt $u=\sum\limits_{\delta}u_\delta{\bf{x}}^{\delta}\in \C[[{\bf{x}}]]^{\ast}$
 \vskip 3pt \noindent such that 
$$(\star) \hskip 30pt  \tilde g = u\psi(\tilde f).$$  

We first claim that there are $u\in\bar{\tilde {K}}[[{\bf{x}}]]^{\ast}$ and $\psi\in Aut(\bar{\tilde {K}}[[{\bf{x}}]])$, with $\bar{\tilde {K}}$ the algebraic closure of $\tilde K$, satisfying ($\star$). By comparing the coefficients in ($\star$) (with $\{c_{\gamma}^{(t)}, u_{\delta}\}$ indeterminates) up to degree $q$ we get an ideal $I_q$ generated by polynomials in $\tilde K[B_q]$ where $B_q$ is a finite subset of size, say $N_q$, of $\{c_{\gamma}^{(t)}, u_{\delta}\}$. Because of ($\star$) the zero set $Z( I_q) \ne \emptyset$ in $\C^{N_q}$ and $Z(I_{q+1})$ extends $Z(I_q)$. By Hilbert's Nullstellensatz $Z(I_q)$ is already contained in $\bar{\tilde K}^{N_q}$. This proves the claim by induction on $q$.

Now let $u$ and $\psi=(\psi_1,...,\psi_n)$ as above, with $u_{\delta}$ and $c_{\gamma}^{(t)}$ algebraic over $\tilde K$. Hence, there are univariate polynomials $\tilde h_{\gamma}^{(t)}, \tilde h_{\delta}\in {\tilde K}[z]$ such that $\tilde h_{\gamma}^{(t)}(c_{\gamma}^{(t)})=0$ and $\tilde h_{\delta}(u_\delta)=0$. Let $\tilde \varphi: K'[z] \to \tilde K[z]$ be the isomorphism induced by the isomorphism $ K' \cong \tilde K$. Now we define $h_\gamma^{(t)}:=\tilde \varphi^{-1}(\tilde h_\gamma^{(t)})$ and $h_\delta:=\tilde \varphi^{-1}(\tilde h_{\delta})$ in $K'[z]$. 

Choose $v_\delta, a_\gamma^{(t)}\in \bar K'\subset K$ to be roots of $h_\delta$ $h_\gamma^{(t)}$, and define
%%%%%%%%%%%%%%%%%
$$v:=\sum\limits_{\delta}v_\delta{\bf{x}}^\delta, \hskip 10pt \phi_t:=\sum\limits_{\gamma}a_\gamma^{(t)}{\bf{x}}^{\gamma}.$$
%%%%%%%%%%%%%%%%%
Then $v\in \bar{K'}[[{\bf{x}}]]^{\ast}\subset K[[{\bf{x}}]]^{\ast}$ and $\phi=(\phi_1,...,\phi_n)\in Aut (K[[{\bf{x}}]])$. We now show that $g=v\phi(f)$.
Let, by abuse of notation, $B_q\subset \{c_\gamma^{(t)}, u_\delta\}\subset{\bar{\tilde K}}$ be the set of all components of the solutions $Z(I_q)\subset ({{\bar{\tilde K}})^{N_q}}$. Let $A_q\subset \{a_\gamma^{(t)}, v_\delta\}\subset \bar {K'}$ be the set corresponding to $B_q$. We have towers of finite separable field extensions
$$\tilde K\subset \tilde K(B_q)\subset \tilde K(B_{q+1})\subset...\subset\C$$ 
and 
$$K'\subset  K'(A_q)\subset  K'(A_{q+1})\subset...\subset K $$
and, by construction, compatible isomorphisms
$K'(A_q)\cong\tilde K(B_q) $ over $ K'\cong\tilde K$. These induce an isomorphism over $K' \cong \tilde K$
$$L':=\bigcup\limits_{q\ge 0}K'(A_q) \to \tilde L:=\bigcup\limits_{q\ge 0}\tilde K(B_q) $$
of subfields of $K$ resp. $\C$ which induces an isomorphism
$$\hat {\varphi}: L'[[{\bf{x}}]] \to \tilde L[[{\bf{x}}]],$$
extending $\varphi : K'[[{\bf{x}}]] \to \tilde K[[{\bf{x}}]]$.
%%%%%%%%%%%%%%%%%
Then we have $\hat{\varphi}(f)=\tilde f$, $\hat\varphi(g)=\tilde g$, $\hat\varphi(v)= u$, and $\hat\varphi (\phi_t)=\psi_t$. This implies
%%%%%%%%%%%%%%%%%%%
$$\hat\varphi(g)=\tilde g= u\psi(\tilde f)=\hat\varphi (v)\psi(\hat\varphi(f))=\hat\varphi (v)\hat\varphi(\phi(f))=\hat\varphi(v\phi(f)),$$
hence $g=v\phi(f).$
\end{proof}

Now we formulate our main results for $K$ an algebraically closed field of any characteristic.

\begin{theorem}{\label{contact}}
Let $f,g\in K \left[\left[\bf{x} 
 \right]\right]$ be such that $ord(f)=s \ge 2$ and $\tau(f)<\infty$. 
%Let $k\ge 2$. 
Then the following are equivalent:\\
i) $f\mathop  \sim \limits^c g$. \\
ii) $T_k(f)  \cong T_k(g)$ as $K$-algebras for some (equivalently for all) $k$ such that $$\mathfrak{m}^{\left\lfloor {\frac{{k + 2s}}{2}} \right\rfloor}\subset\mathfrak{m}\left\langle f \right\rangle +\mathfrak{m}^2{\rm {j}}(f)$$
\noindent where ${\left\lfloor {\frac{{k + 2s}}{2}} \right\rfloor}$ means the maximal integer which does not exceed $\frac{{k + 2s}}{2}$.
\end{theorem}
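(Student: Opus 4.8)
The plan is to prove the two directions separately. Call an index $k$ \emph{admissible} if $\mathfrak{m}^{\lfloor(k+2s)/2\rfloor}\subset\mathfrak{m}\langle f\rangle+\mathfrak{m}^2\mathrm{j}(f)$. First I would check that admissible $k$ exist: since $\tau(f)<\infty$ and $\mathrm{tj}(f)/(\langle f\rangle+\mathfrak{m}\,\mathrm{j}(f))$ is spanned over $K$ by the classes of $\partial_1 f,\dots,\partial_n f$, the ideal $\langle f\rangle+\mathfrak{m}\,\mathrm{j}(f)$ has finite colength, hence contains some $\mathfrak{m}^N$, so $\mathfrak{m}^{N+1}\subset\mathfrak{m}\langle f\rangle+\mathfrak{m}^2\mathrm{j}(f)$ and every large $k$ is admissible; moreover the admissible $k$ form an upper set since $\lfloor(k+2s)/2\rfloor$ is non-decreasing in $k$. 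For (i)$\Rightarrow$(ii) it suffices to see that $T_k$ is a contact invariant for every $k$: if $g=u\varphi(f)$ with $\varphi\in\mathrm{Aut}(K[[\mathbf{x}]])$ and $u$ a unit, then the chain rule together with the invertibility of the Jacobian matrix of $\varphi$ gives $\varphi(\mathfrak{m}^k\mathrm{j}(f))=\mathfrak{m}^k\mathrm{j}(\varphi(f))$, and the identity $\partial_i(u\varphi(f))=u\,\partial_i\varphi(f)+(\partial_i u)\varphi(f)$ gives $\langle u\varphi(f),\mathfrak{m}^k\mathrm{j}(u\varphi(f))\rangle=\langle\varphi(f),\mathfrak{m}^k\mathrm{j}(\varphi(f))\rangle=\varphi(\langle f,\mathfrak{m}^k\mathrm{j}(f)\rangle)$, so $T_k(g)\cong T_k(f)$ as $K$-algebras.

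For the converse I would proceed as follows. Assuming $T_k(f)\cong T_k(g)$ for one admissible $k$, I first note that admissibility forces $\lfloor(k+2s)/2\rfloor\ge s+1$, hence $k\ge 2$ (otherwise $\mathfrak{m}^s\subset\mathfrak{m}\langle f\rangle+\mathfrak{m}^2\mathrm{j}(f)\subset\mathfrak{m}^{s+1}$, which is absurd). By the Lifting Lemma \cite[Lemma 1.23]{GLS07} the isomorphism lifts to $\theta\in\mathrm{Aut}(K[[\mathbf{x}]])$ with $\theta(\langle f,\mathfrak{m}^k\mathrm{j}(f)\rangle)=\langle g,\mathfrak{m}^k\mathrm{j}(g)\rangle$, which by the computation of the first paragraph equals $\langle\theta(f),\mathfrak{m}^k\mathrm{j}(\theta(f))\rangle$. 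Since $\theta(f)$ is right equivalent to $f$, has order $s$, satisfies $\tau(\theta(f))<\infty$, and remains admissible (apply $\theta$, which fixes every $\mathfrak{m}^j$), I may replace $f$ by $\theta(f)$ and so assume $\langle f,\mathfrak{m}^k\mathrm{j}(f)\rangle=\langle g,\mathfrak{m}^k\mathrm{j}(g)\rangle=:I$. As $\mathrm{j}(f)\subset\mathfrak{m}^{s-1}$ and $k\ge 2$, one has $I\subset\mathfrak{m}^s$, hence $\mathrm{ord}(g)\ge s$ and $\mathrm{j}(g)\subset\mathfrak{m}^{s-1}$ too. Writing $g=af+r$ and $f=bg+r'$ we get $r\in\mathfrak{m}^k\mathrm{j}(f)\subset\mathfrak{m}^{k+s-1}$ and $r'\in\mathfrak{m}^k\mathrm{j}(g)\subset\mathfrak{m}^{k+s-1}$, both contained in $\mathfrak{m}^{s+1}$ because $k\ge 2$; so if $a\in\mathfrak{m}$ then $g=af+r\in\mathfrak{m}^{s+1}$ and hence $f=bg+r'\in\mathfrak{m}^{s+1}$, contradicting $\mathrm{ord}(f)=s$. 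Thus $a$ is a unit.

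Finally I would invoke finite determinacy. With $k':=\lfloor(k+2s)/2\rfloor-2$ the admissibility hypothesis is precisely $\mathfrak{m}^{k'+2}\subset\mathfrak{m}\langle f\rangle+\mathfrak{m}^2\mathrm{j}(f)$, so part ii) of Theorem~\ref{BGM12} shows $f$ is contact $(2k'-s+2)$-determined; as $2k'-s+2=2\lfloor(k+2s)/2\rfloor-s-2\le k+s-2$, in particular $f$ is contact $(k+s-2)$-determined. Since $a^{-1}g-f=a^{-1}r\in\mathfrak{m}^{k+s-1}=\mathfrak{m}^{(k+s-2)+1}$, it follows that $a^{-1}g\mathop{\sim}\limits^{c}f$, and as $g\mathop{\sim}\limits^{c}a^{-1}g$ (multiplication by the unit $a$) we conclude $g\mathop{\sim}\limits^{c}f$; because $f$ was replaced by the right-equivalent $\theta(f)$, this yields $f\mathop{\sim}\limits^{c}g$ for the original data. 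Combined with (i)$\Rightarrow$(ii), this also gives that (ii) for one admissible $k$ is equivalent to (ii) for all admissible $k$. The only genuinely hard ingredient is Theorem~\ref{BGM12}, which in positive characteristic replaces the vector-field integration of the classical proof; everything else is bookkeeping, the subtle point being that the floor in the admissibility condition is tuned exactly so that the determinacy exponent $k+s-2$ dominates the order $k+s-1$ of the error ideal $\mathfrak{m}^k\mathrm{j}(f)$ — which is what makes the last step close.
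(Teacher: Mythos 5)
Your proposal is correct and follows essentially the same route as the paper: the same computation showing contact invariance of $\big\langle f,\mathfrak{m}^k{\rm j}(f)\big\rangle$, the Lifting Lemma to reduce to equality of ideals, a unit/non-unit case split on the coefficient relating $f$ and $g$, and Theorem~\ref{BGM12} to conclude via finite contact determinacy with the identical numerical bookkeeping. The only (harmless) cosmetic differences are that you expand $g$ in terms of $f$ rather than the reverse, dispose of the non-unit case by a direct order comparison instead of substituting the second relation, and extract $k\ge 2$ from admissibility up front rather than as the concluding contradiction.
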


\begin{corollary}\label{contact1}
Let $f$ and $g$ be as in Theorem {\ref{contact}}. Then
 $f\mathop  \sim \limits^c g$ iff
 $T_k(f)  \cong T_k(g)$ as $K$-algebras for some (equivalently for all) $k\ge2\tau(f)-2s+4$.
\end{corollary}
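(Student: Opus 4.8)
The plan is to deduce the corollary from Theorem \ref{contact} by producing, from the sole hypothesis $\tau(f)<\infty$, an explicit integer $k_0$ for which the inclusion $\mathfrak{m}^{\lfloor(k+2s)/2\rfloor}\subset\mathfrak{m}\langle f\rangle+\mathfrak{m}^2{\rm j}(f)$ holds for all $k\ge k_0$, and to check that $k_0=2\tau(f)-2s+4$ works. First I would observe that $\tau(f)<\infty$ means $T(f)=K[[{\bf x}]]/{\rm tj}(f)$ is a finite-dimensional $K$-algebra, hence Artinian and local with maximal ideal the image of $\mathfrak{m}$; therefore some power of $\mathfrak{m}$ lies in ${\rm tj}(f)$. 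The standard bound, coming from the fact that a chain $\mathfrak{m}^j T(f)\supsetneq \mathfrak{m}^{j+1}T(f)$ of proper inclusions can have length at most $\dim_K T(f)=\tau(f)$, is $\mathfrak{m}^{\tau(f)}\subset {\rm tj}(f)=\langle f\rangle+{\rm j}(f)$. This is the numerical input I will leverage.

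Next I would upgrade this to the "thickened" ideal appearing in Theorem \ref{contact}. Multiplying $\mathfrak{m}^{\tau(f)}\subset\langle f\rangle+{\rm j}(f)$ by $\mathfrak{m}^2$ gives $\mathfrak{m}^{\tau(f)+2}\subset\mathfrak{m}^2\langle f\rangle+\mathfrak{m}^2{\rm j}(f)\subset\mathfrak{m}\langle f\rangle+\mathfrak{m}^2{\rm j}(f)$. So the inclusion needed in Theorem \ref{contact}\,ii) holds as soon as $\lfloor(k+2s)/2\rfloor\ge\tau(f)+2$. Since $\lfloor(k+2s)/2\rfloor\ge(k+2s)/2 - 1/2 = (k+2s-1)/2$, it suffices to have $(k+2s-1)/2\ge \tau(f)+2$, i.e. $k\ge 2\tau(f)+4-2s+1=2\tau(f)-2s+5$; and in fact, since $k$ and $2s$ are integers one checks directly that $k\ge 2\tau(f)-2s+4$ already forces $k+2s\ge 2\tau(f)+4$, hence $\lfloor(k+2s)/2\rfloor\ge\tau(f)+2$. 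Thus for every $k\ge 2\tau(f)-2s+4$ the hypothesis of Theorem \ref{contact}\,ii) is satisfied, and the "some $k$ / all $k$" equivalence there yields the corollary: $f\sim^c g$ iff $T_k(f)\cong T_k(g)$ for some (equivalently all) such $k$.

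The only genuine point to be careful about is the passage from $\mathfrak{m}^{\tau(f)}\subset{\rm tj}(f)$ to the precise constant $2\tau(f)-2s+4$, i.e. matching the floor-function bound of Theorem \ref{contact} against the Loewy-length estimate for $T(f)$; I expect this to be the main (though still routine) obstacle, amounting to an elementary inequality check together with a verification that the quantifier structure ("for some $k$" versus "for all $k$ in the admissible range") is inherited correctly from Theorem \ref{contact}. One should also note in passing that, as in the characteristic-zero Proposition, $T_k(f)\cong T_k(g)$ being finite-dimensional forces $\tau(g)<\infty$ as well, so the statement is symmetric in $f$ and $g$ and the admissible range of $k$ is nonempty.
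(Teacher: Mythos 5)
Your proposal is correct and follows essentially the same route as the paper: the Loewy-length bound $\mathfrak{m}^{\tau(f)}\subset{\rm tj}(f)$, multiplication by $\mathfrak{m}^2$ to get $\mathfrak{m}^{\tau(f)+2}\subset\mathfrak{m}\left\langle f\right\rangle+\mathfrak{m}^2{\rm j}(f)$, and the integrality check that $k\ge 2\tau(f)-2s+4$ forces $\left\lfloor\frac{k+2s}{2}\right\rfloor\ge\tau(f)+2$, after which Theorem \ref{contact} applies. No gaps.
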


 We set for any ideal $I\subset K[[{\bf{x}}]]$,
$$ord(I):=max\{l\in {\N}\hskip3pt| \hskip 3pt I\subset \mathfrak{m}^l\}.$$

\begin{theorem}{\label{right}}
Let $f,g\in K \left[\left[\bf{x} 
 \right]\right]$ be such that $ord(f)=s \ge 2$ and $\mu(f)<\infty$. Let $s'=ord({\rm {j}}(f))$. Then the following are equivalent:\\
i) $f\mathop  \sim \limits^r g$. \\
ii) $M_k(f)  \cong M_k(g)$ as $K[[t]]$- algebras for some (equivalently for all) $k$ such that $$\mathfrak{m}^{\left\lfloor {\frac{{k + s+s'+1}}{2}} \right\rfloor}\subset\mathfrak{m}^2{\rm {j}}(f).$$
\end{theorem}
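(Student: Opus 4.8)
The plan is to reduce both implications to the finite determinacy theorem, Theorem~\ref{BGM12}(i), essentially transcribing the argument used for Theorem~\ref{contact} but carrying the $t$-action along. The implication i)~$\Rightarrow$~ii) is the routine one: if $g=\varphi(f)$ with $\varphi\in Aut(K[[{\bf x}]])$, then the chain rule gives ${\rm j}(g)\subseteq\varphi({\rm j}(f))$, and since the Jacobian matrix of $\varphi$ has invertible value at $0$, hence unit determinant in any characteristic, one gets $\varphi({\rm j}(f))={\rm j}(g)$; together with $\varphi(\mathfrak{m})=\mathfrak{m}$ this yields $\varphi(\mathfrak{m}^k{\rm j}(f))=\mathfrak{m}^k{\rm j}(g)$, so $\varphi$ induces a $K$-algebra isomorphism $M_k(f)\cong M_k(g)$, which is even a $K[[t]]$-algebra isomorphism because $\varphi(f)=g$ makes multiplication by the class of $f$ correspond to multiplication by the class of $g$. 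As $ord$ and $ord({\rm j}(\cdot))$ are right invariants and $\mathfrak{m}^2{\rm j}(g)=\varphi(\mathfrak{m}^2{\rm j}(f))$, the set of admissible indices $k$ is the same for $f$ and $g$, so the conclusion holds for every such $k$.

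For ii)~$\Rightarrow$~i), the substantial direction, I would first observe that $\mu(f)<\infty$ forces $\mathfrak{m}^N\subseteq{\rm j}(f)$ for some $N$, so all the algebras $M_k(f)$, $M_k(g)$ are finite dimensional and the Lifting Lemma \cite[Lemma~1.23]{GLS07} applies. Given a $K[[t]]$-algebra isomorphism $\bar\theta\colon M_k(f)\to M_k(g)$, lift it to an automorphism $\theta$ of $K[[{\bf x}]]$ inducing $\bar\theta$ and with $\theta(\mathfrak{m}^k{\rm j}(f))=\mathfrak{m}^k{\rm j}(g)$. The conceptual core of the proof --- and the reason the statement holds here while it fails for $M_k$ regarded as a plain $K$-algebra --- is that $K[[t]]$-linearity of $\bar\theta$ forces it to send the class of $f$ to the class of $g$, so that after lifting $\theta(f)-g\in\mathfrak{m}^k{\rm j}(g)$.

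Next I would put $F:=\theta(f)$, so that $F\mathop{\sim}\limits^r f$, and transport the data along the automorphism $\theta$: since $\theta$ is an automorphism, ${\rm j}(F)=\theta({\rm j}(f))$, hence $\mathfrak{m}^k{\rm j}(F)=\mathfrak{m}^k{\rm j}(g)$, $ord({\rm j}(F))=s'$, $ord(F)=s$, and $\mathfrak{m}^{N_0}=\theta(\mathfrak{m}^{N_0})\subseteq\theta(\mathfrak{m}^2{\rm j}(f))=\mathfrak{m}^2{\rm j}(F)$, where $N_0:=\lfloor\frac{k+s+s'+1}{2}\rfloor$ is the exponent from the hypothesis (note $N_0\ge 2$). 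From $g-F\in\mathfrak{m}^k{\rm j}(F)\subseteq\mathfrak{m}^{k+s'}$ I get an order bound on $g-F$, while $\mathfrak{m}^{N_0}\subseteq\mathfrak{m}^2{\rm j}(F)$ lets me apply Theorem~\ref{BGM12}(i) with index $N_0-2$, so that $F$ is right $(2N_0-2-s)$-determined. It then only remains to check the inequality $k+s'\ge 2N_0-1-s$, which is exactly $N_0\le\frac{k+s+s'+1}{2}$ and is built into the floor in the definition of $N_0$; hence $g-F\in\mathfrak{m}^{k+s'}\subseteq\mathfrak{m}^{2N_0-1-s}$ and therefore $F\mathop{\sim}\limits^r g$, giving $f\mathop{\sim}\limits^r F\mathop{\sim}\limits^r g$. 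The ``for some $\Leftrightarrow$ for all'' clause in ii) is then purely formal, from i)~$\Leftrightarrow$~ii)-for-some-$k$ together with i)~$\Rightarrow$~ii)-for-all-$k$.

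I expect the only genuine point requiring care to be the bookkeeping with the floor function and the various orders, i.e. confirming that the determinacy exponent delivered by Theorem~\ref{BGM12} is at most $ord(g-F)-1$; the rest of the argument is a faithful copy of the contact-equivalence case, the single new ingredient being the observation about the $t$-action in the lifting step. One should also dispose of the trivial edge cases (one may assume $n\ge 1$, and then $\mu(f)<\infty$ gives $f\ne 0$, hence $F=\theta(f)\ne 0$ lies in $\mathfrak{m}^2$, as required to invoke Theorem~\ref{BGM12}).
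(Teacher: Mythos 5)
Your proposal is correct and follows essentially the same route as the paper: lift the $K[[t]]$-algebra isomorphism via the Lifting Lemma, use the $t$-action to conclude $\theta(f)-g\in\mathfrak{m}^k{\rm j}(g)$, and then apply Theorem \ref{BGM12}(i) with the index $\left\lfloor\frac{k+s+s'+1}{2}\right\rfloor-2$ together with the order bound $ord(g-\theta(f))\ge k+s'$; your explicit bookkeeping with $F=\theta(f)$ is just a spelled-out version of the paper's remark that the relevant invariants are preserved under right equivalence.
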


\begin{corollary}\label{right1}
Let $f$ and $g$ be as in Theorem {\ref{right}}. Then
$f\mathop  \sim \limits^r g$ iff
 $M_k(f)  \cong M_k(g)$ as $K[[t]]$-algebras for some (equivalently for all) $k\ge2\mu(f)-s-s'+3$.
\end{corollary}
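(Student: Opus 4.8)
The plan is to obtain Corollary~\ref{right1} as a direct specialization of Theorem~\ref{right}. Concretely, I would show that every integer $k$ with $k\ge 2\mu(f)-s-s'+3$ already satisfies the hypothesis $\mathfrak{m}^{\left\lfloor (k+s+s'+1)/2\right\rfloor}\subset\mathfrak{m}^{2}{\rm j}(f)$ occurring in part ii) of that theorem, and that the set of such $k$ is non-empty. Writing $P(k)$ for the property ``$\mathfrak{m}^{\left\lfloor (k+s+s'+1)/2\right\rfloor}\subset\mathfrak{m}^{2}{\rm j}(f)$'', this amounts to saying that $\{k : k\ge 2\mu(f)-s-s'+3\}$ is a non-empty subset of $\{k : P(k)\}$, whence both assertions ``$M_k(f)\cong M_k(g)$ for some $k\ge 2\mu(f)-s-s'+3$'' and ``$M_k(f)\cong M_k(g)$ for all $k\ge 2\mu(f)-s-s'+3$'' lie, in the order of implication, between ``$M_k(f)\cong M_k(g)$ for all $k$ with $P(k)$'' and ``$M_k(f)\cong M_k(g)$ for some $k$ with $P(k)$''. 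By Theorem~\ref{right} these outer two statements are equivalent, each being equivalent to $f\mathop{\sim}\limits^{r}g$, and therefore so are all four; this is exactly the content of the corollary.

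The reduction then splits into two elementary pieces. First, since $\mu(f)+2$ is an integer, the inequality $\left\lfloor (k+s+s'+1)/2\right\rfloor\ge\mu(f)+2$ holds exactly when $(k+s+s'+1)/2\ge\mu(f)+2$, i.e. exactly when $k\ge 2\mu(f)-s-s'+3$; so for such $k$ we have $\mathfrak{m}^{\left\lfloor (k+s+s'+1)/2\right\rfloor}\subset\mathfrak{m}^{\mu(f)+2}$, and it remains only to verify the single inclusion $\mathfrak{m}^{\mu(f)+2}\subset\mathfrak{m}^{2}{\rm j}(f)$. Second, this inclusion is the standard Artinian estimate: since $\mu(f)=dim_K K[[{\bf x}]]/{\rm j}(f)<\infty$ and $K$ is algebraically closed, $A:=K[[{\bf x}]]/{\rm j}(f)$ is an Artinian local ring with residue field $K$ and maximal ideal $\bar{\mathfrak{m}}=\mathfrak{m}/{\rm j}(f)$; by Nakayama's lemma the chain $A\supseteq\bar{\mathfrak{m}}\supseteq\bar{\mathfrak{m}}^{2}\supseteq\cdots$ is strictly decreasing until it reaches $0$, and as each non-zero quotient $\bar{\mathfrak{m}}^{j}/\bar{\mathfrak{m}}^{j+1}$ contributes at least $1$ to $dim_K A=\mu(f)$, one gets $\bar{\mathfrak{m}}^{\mu(f)}=0$, i.e. $\mathfrak{m}^{\mu(f)}\subset{\rm j}(f)$; multiplying by $\mathfrak{m}^{2}$ yields $\mathfrak{m}^{\mu(f)+2}=\mathfrak{m}^{2}\cdot\mathfrak{m}^{\mu(f)}\subset\mathfrak{m}^{2}{\rm j}(f)$, as required.

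I do not expect a genuine obstacle here; the corollary is essentially the numerical translation of Theorem~\ref{right}. The only points needing a little care are the quantifier bookkeeping sketched above, the non-emptiness of the index range (indeed $\mathfrak{m}^{\mu(f)}\subset{\rm j}(f)\subset\mathfrak{m}^{s'}$ forces $\mu(f)\ge s'$, while each $\partial f/\partial x_i\in\mathfrak{m}^{s-1}$ forces $s'\ge s-1$, so $2\mu(f)-s-s'+3\ge s'-s+3\ge 2>0$), and the reminder that throughout $M_k(f)$ is taken as a $K[[t]]$-algebra with $t$ acting by multiplication by $f$, so that the isomorphisms in the corollary are precisely those of Theorem~\ref{right}~ii).
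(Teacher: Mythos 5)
Your proposal is correct and follows essentially the same route as the paper: reduce to Theorem \ref{right} by showing $k\ge 2\mu(f)-s-s'+3$ forces $\left\lfloor (k+s+s'+1)/2\right\rfloor\ge\mu(f)+2$ and hence $\mathfrak{m}^{\left\lfloor (k+s+s'+1)/2\right\rfloor}\subset\mathfrak{m}^{\mu(f)+2}\subset\mathfrak{m}^{2}{\rm j}(f)$. The paper simply asserts the Artinian inclusion $\mathfrak{m}^{\mu(f)+2}\subset\mathfrak{m}^{2}{\rm j}(f)$ without proof, whereas you spell it out via the descending chain argument; that and your quantifier bookkeeping are just added detail, not a different method.
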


\begin{remark}
{\rm (1) Condition i) of Theorem {\ref{contact}} (resp. Theorem {\ref{right}}) implies $T_k(f)  \cong T_k(g)$ (resp. $M_k(f)  \cong M_k(g)$) for all $k$ without assuming $\tau$ (resp. $\mu$) to be finite, see the proofs of $i) \Rightarrow ii)$.
On the other hand, the inclusion relation in Theorem {\ref{contact}} ii) (resp. Theorem {\ref{right}} ii)) implies already, that $\tau(f)$ (resp. $\mu(f)$) is finite by Theorem {\ref{BGM12}} and \cite[Theorem 4]{BGM12}.\\
\noindent(2) Since ideal-membership in power series rings can be effectively tested (e.g. by standard basis methods, cf. \cite{GP08} and \cite{DGPS12}) the bounds for $k$ in Theorems {\ref{contact}} and {\ref{right}} can be effectively computed. Corollaries {\ref{contact1}} resp. {\ref{right1}} provide the simple bounds $k\ge2\tau(f)$ resp. $k\ge2\mu(f)$.\\
\noindent (3) The inclusion relation in ii) implies $k\ge 2$ in Theorem {\ref{contact}}, which is necessary as the following example shows. Take $f=y^2+x^3y$ and $g=f+x^5$ in characteristic $2$, then $T_k(f)=T_k(g)$ for $k=0,1$ but $f\mathop{\not\sim}\limits^{c} g$ ($f$ has two branches and $g$ is irreducible as can be verified by using Singular \cite{DGPS12}).\\
\noindent(4) It was proved in \cite{Sho76}, see also \cite{Yau83}, that for an isolated quasi-homogeneous singularity $f\in \mathfrak{m}\subset \C\{{\bf x}\}$ and any $g\in \mathfrak{m}$, $M(f)\cong M(g)$ as $\C$-algebras implies that $f\mathop  \sim \limits^r g$. This theorem does not have an analogue in characteristic $p$, even if we use the higher Milnor algebras. For example, for the homogeneous polynomials $f=x^{p+1}+y^{p+1}$ and $g=f+x^p$, we have $M_k(f)=M_k(g)$ as $K$-algebras for all $k$, but $f$ is not right equivalent to $g$.}
\end{remark}

%%%%%%%%%%%%%%%%%%%%%%%%
%%%%%%%%%%%%%%%%%%%%%%%%
\section{Proofs}

\begin{proof}[Proof of Theorem {\ref{contact}}]
$i) \Rightarrow ii).$ By definition of contact equivalence, there are $\varphi  \in$ $Aut(K[[{\textbf{x}}]])$ and $u \in K{[[ \textbf{x}]]^ \ast }$ such that $g=u\varphi (f)$. For arbitrary $k$, we have
\begin{align*}
 \left\langle g, \mathfrak{m}^k{\rm {j}}(g)\right\rangle = &\left\langle {u\varphi (f),{\mathfrak{m}^k}{\rm {j}}\big({u\varphi (f)} \big)} \right\rangle  = \left\langle {\varphi (f),{\mathfrak{m}^k}{\rm {j}}\big(\varphi (f)\big)} \right\rangle \\
=  &\left\langle {\varphi (f),{\mathfrak{m}^k}\varphi \big({\rm {j}}(f)\big)} \right\rangle  = \varphi\big(\big\langle {f,{\mathfrak{m}^k}{\rm {j}}(f)} \big\rangle\big).
\end{align*}
$ii) \Rightarrow i).$ Suppose that for some $k$ such that $\mathfrak{m}^{\left\lfloor {\frac{{k + 2s}}{2}} \right\rfloor}\subset\mathfrak{m}\left\langle f \right\rangle +\mathfrak{m}^2{\rm {j}}(f),$
 $\varphi $ is an isomorphism of the $K$-algebras in ii). Then by the Lifting lemma \cite[Lemma 1.23]{GLS07}, which works for formal power series over any field with the trivial valuation, $ \varphi$ lifts to an isomorphism $\tilde \varphi $: $K[[{\bf x}]]\rightarrow K[[{\bf x}]]$ with $\tilde \varphi\big(\big\langle {f,{\mathfrak{m}^k}{\rm {j}}(f)} \big\rangle\big)=\big\langle {g,{\mathfrak{m}^k}{\rm {j}}(g)} \big\rangle$. Since
$$\tilde \varphi \left( {\left\langle {f,{\mathfrak{m}^k}{\rm {j}}(f)} \right\rangle } \right) = \left\langle {\tilde \varphi (f),{\mathfrak{m}^k}\tilde \varphi ({\rm {j}}(f))} \right\rangle  = \left\langle {\tilde\varphi (f),{\mathfrak{m}^k}{\rm {j}}(\tilde\varphi (f))} \right\rangle,$$
we may assume that
$$\left\langle {f,{\mathfrak{m}^k}{\rm {j}}(f)} \right\rangle  = \left\langle {g,{\mathfrak{m}^k}{\rm {j}}(g)} \right\rangle. $$

This implies $f=h_1g+H$ for some $h_1\in K[[{\bf x}]]$ and $H\in \mathfrak{m}^k{\rm {j}}(g)$. Since $f\in \mathfrak{m}^s$, ${\rm {j}}(f)\subset\mathfrak{m}^{s-1}$ and hence $\mathfrak{m}^k{\rm {j}}(f)\subset\mathfrak{ m}^{k+s-1}\subset \mathfrak{m}^s$ so that $g\in \mathfrak{m}^s$ and then $H\in \mathfrak{m}^{k+s-1}$. Consider two cases:
%%%%%%%%%%%%%
\vskip4pt\noindent Case 1: $h_1$ is a unit. Since $\mathfrak{m}^{\left\lfloor {\frac{{k + 2s}}{2}} \right\rfloor}\subset\mathfrak{m}\left\langle f \right\rangle +\mathfrak{m}^2{\rm {j}}(f),$ by Theorem {\ref{BGM12}}, $f$ is contact \big($2\big\lfloor {\frac{{k + 2s}}{2}} \big\rfloor- s -2$\big)-determined. Since $f-h_1g\in \mathfrak{m}^{k+s-1}$ and $k+s-1>2\frac{k+2s}{2}-s-2\ge2\left\lfloor {\frac{{k + 2s}}{2}} \right\rfloor-s-2$, we have ${h_1}g\mathop  \sim \limits^c f$. Moreover, since $h_1$ is a unit, $g\mathop  \sim \limits^c {h_1}g$. Hence, $g\mathop  \sim \limits^c f$.\\
%%%%%%%%%%%%%%%%%%%
Case 2: $h_1$ is not a unit. Since $g=h_2f+G$ for some  $h_2\in K[[{\bf x}]]$ and $G\in \mathfrak{m}^k{\rm {j}}(f)$, we have
$$f=h_1g+H=h_1(h_2f+G)+H=h_1h_2f+h_1G+H.$$
Since $h_1\in\mathfrak{ m}$ and $G\in \mathfrak{m}^{k+s-1}$, this implies
$$(1-h_1h_2)f=h_1G+H\in \mathfrak{m}^{k+s-1}.$$
Hence, $f\in\mathfrak{ m}^{k+s-1}$ since $1-h_1h_2$ is a unit. On the other hand since $ord(f)=s$, $f\not  \in\mathfrak{ m}^{s+1}$. Therefore, $k\le1$, a contradiction.
\end{proof}

\begin{proof}[Proof of Corollary {\ref{contact1}}]
If  $f\mathop  \sim \limits^c g$ then the two $K$-algebras $T_k(f)$  and $T_k(g)$ are isomorphic for all $k$ by  Theorem {\ref{contact}}. Conversely, suppose that $T_k(f)\cong T_k(g)$ for $k\ge2\tau(f)-2s+4$. The fact that $dim_KT(f)=\tau$ implies ${\mathfrak{m}}^\tau\subset {\rm tj}(f)$ so that
$$\mathfrak{m}^{\tau+2}\subset\mathfrak{m}^2\left\langle f\right\rangle+ \mathfrak{m}^2{\rm {j}}(f)\subset\mathfrak{m}\left\langle f \right\rangle +\mathfrak{m}^2{\rm {j}}(f).$$
On the other hand, since $k\ge 2\tau-2s+4$ we have $\frac{k+2s}{2}\ge \tau+2$ so that $\left\lfloor {\frac{{k + 2s}}{2}} \right\rfloor\ge \tau+2$. This implies  $\mathfrak{m}^{\left\lfloor {\frac{{k + 2s}}{2}} \right\rfloor}\subset\mathfrak{m}^{\tau+2} \subset\mathfrak{m}\left\langle f \right\rangle +\mathfrak{m}^2{\rm {j}}(f)$. By Theorem {\ref{contact}}, we get $f\mathop  \sim \limits^c g$.
\end{proof}

\begin{proof}[Proof of Theorem {\ref{right}}]
$i)\Rightarrow ii).$ By assumption, there is a $\phi\in Aut(K[[{\bf{x}}]])$ such that $g=\phi(f)$. Then for arbitrary $k$, we have
$$\phi(\mathfrak{m}^k{\rm {j}}(f))=\mathfrak{m}^k{\rm {j}}(\phi(f))=\mathfrak{m}^k{\rm {j}}(g).$$
The $K$-algebra automorphism $\phi$ induces the $K$-algebra isomorphism 
$$\bar\phi: M_k(f)\to M_k(g), \hskip 10pt \bar{h}\mapsto \overline{\phi(h)},$$
which is a $K[[t]]$-algebra isomorphism since $\bar\phi(\bar f)=\overline{\phi(f)}=\bar g$.

\noindent $ii)\Rightarrow i).$ Using the Lifting lemma, the $K[[t]]$-algebra isomorphism $$\bar\phi : M_k(f) \to M_k(g)$$ 
lifts to a $K$-algebra isomorphism $\phi \in Aut(K[[{\bf{x}}]])$ such that $\phi(\mathfrak{m}^k{\rm {j}}(f))=\mathfrak{m}^k{\rm {j}}(g).$ Since $\bar\phi(\bar f)=\bar g$ and $\bar\phi(\bar f)=\overline {\phi(f)}$ in $M_k(g)$, we get
$\phi(f)-g\in \mathfrak{m}^k{\rm {j}}(g)=\mathfrak{m}^k{\rm {j}}(\phi(f))$. Since $ord(f)$, $ord({\rm {j}}(f))$, and the degree of the right determinacy of $f$ are invariant under right equivalence, we reduce to the situation $f-g\in \mathfrak{m}^k{\rm {j}}(f)$. This implies $f-g\in \mathfrak{m}^{k+s'}$. Moreover, since $\mathfrak{m}^{\left\lfloor {\frac{{k + s+s'+1}}{2}} \right\rfloor}\subset\mathfrak{m}^2{\rm {j}}(f)$,  by Theorem {\ref{BGM12}}, $f$ is right $(2{\left\lfloor {\frac{{k + s+s'+1}}{2}} \right\rfloor}-s-2)$-determined. Since $k+s'>2{\left\lfloor {\frac{{k + s+s'+1}}{2}} \right\rfloor}-s-2$, we get $f\mathop  \sim \limits^r g$.
\end{proof}

\begin{proof}[Proof of Corollary {\ref{right1}}]
We proved in Theorem {\ref{right}} that if  $f\mathop  \sim \limits^r g$ then  $M_k(f)$  and $M_k(g)$ are isomorphic as $K[[t]]$-algebras for all $k$. Conversely, since $dim_KM(f)=\mu$, $\mathfrak{m}^{\mu+2}\subset\mathfrak{m}^2{\rm {j}}(f).$ Since $k\ge 2\mu-s-s'+3$, we have $\left\lfloor {\frac{{k + s+s'+1}}{2}} \right\rfloor\ge\mu+2$ so that
$$\mathfrak{m}^{\left\lfloor {\frac{{k + s+s'+1}}{2}} \right\rfloor}\subset\mathfrak{m}^{\mu+2} \subset\mathfrak{m}^2{\rm {j}}(f).$$
By Theorem {\ref{right}}, we get $f\mathop  \sim \limits^r g$.
\end{proof}

{\textit {Acknowledgments}}:
The second author was supported by DAAD (Germany). We like to thank the referees for careful proofreading.

%%%%%%%%%%%%%%%%%%%%%%%%%%%%%%
%%%%%%%%%%%%%%%%%%%%%%%%%%%%%%

{\scshape Fachbereich Mathematik, Universit\"{a}t Kaiserslautern,\\
\indent Erwin-Schr\"{o}dinger Stra$\ss$e,
67663 Kaiserslautern,  Germany.}\\
\indent{\it E-mail address}: {greuel@mathematik.uni-kl.de}

{\scshape Fachbereich Mathematik, Universit\"{a}t Kaiserslautern,\\
\indent Erwin-Schr\"{o}dinger Stra$\ss$e,
67663 Kaiserslautern,  Germany.}\\
\indent{\it E-mail address}: {hpham@mathematik.uni-kl.de}

\end{document}